\newcommand{\Ex}{\mathbb{E}}
\newcommand{\Var}{\mathbb{V}}
\newcommand{\mc}{\mathcal}
\newcommand{\msc}{\mathscr}
\newcommand{\br}{\mathsf{b}}
\newcommand{\betw}{\mathsf{i}}
\newcommand{\ext}{\mathsf{e}}
\newcommand{\permset}{\underline{\pi}}
\newcommand{\ssq}{\subseteq}
\newtheorem{lemma}{Lemma}
\newtheorem{theorem}[lemma]{Theorem}
\newtheorem{proposition}[lemma]{Proposition}
\theoremstyle{definition}
\newtheorem{conjecture}[lemma]{Conjecture}
\newtheorem{remark}[lemma]{Remark}
\title{A bijection for the evolution of \textit{B}-trees}
\author[F. Burghart]{Fabian Burghart}
\address{Department of Mathematics and Computer Science, Eindhoven University of Technology, 5612AE Eindhoven, The Netherlands}
\email{f.burghart@tue.nl}
\author[S. Wagner]{Stephan Wagner}
\address{Institute of Discrete Mathematics, TU Graz, Steyrergasse 30, 8010 Graz, Austria \and Department of Mathematics, Uppsala University, Box 480, 751 06 Uppsala, Sweden}
\email{stephan.wagner@tugraz.at}
\date{}
\keywords{\textit{B}-trees, histories, increasing trees, bijection, asymptotic enumeration, tree statistics}
\subjclass[2020]{05A19 (Primary); 68P10, 60C05, 05A05, 05A16 (Secondary)}
\begin{document}

\begin{abstract}
A $B$-tree is a type of search tree where every node (except possibly for the root) contains between $m$ and $2m$ keys for some positive integer $m$, and all leaves have the same distance to the root. We study sequences of $B$-trees that can arise from successively inserting keys, and in particular present a bijection between such sequences (which we call histories) and a special type of increasing trees. We describe the set of permutations for the keys that belong to a given history, and also show how to use this bijection to analyse statistics associated with $B$-trees.
\end{abstract}

\maketitle

\section{Introduction and main results}

$B$-trees, since their inception in \cite{BM72}, have become a popular data structure. Regarding their mathematical analysis, there were some early results by Yao \cite{yao78} and Odlyzko \cite{odlyzko} for the special case of 2-3-trees, but despite Knuth posing a natural open question in \cite{chvatal72}, progress has been scarce. Perhaps most notable is the approach using P\'{o}lya urns as in \cite{BP85,BY95,AFP88,CGPT16}, which yielded results especially for the fringe analysis of $B$-trees. In this paper, we propose a novel way of investigating $B$-trees, by focussing on what we call histories. 

\subsection{\textit{B}-trees and their insertion algorithm}

By a search tree, we mean a rooted plane tree whose nodes contain \emph{keys}, which we think of as pairwise distinct real numbers, in such a way that (1) the keys are stored in increasing order from left to right (including within a single node), and (2) every non-leaf node containing $k$ keys has exactly $k+1$ children, where we think of the $i$-th child as being attached between the $(i-1)$-th and the $i$-th key of its parent node. For $i=1$ we interpret this as being attached to the left of the first key, and analogously for $i=k+1$, the child is attached to the right of the last key in the node. Note that we explicitly allow leaves to contain keys, and will refer to the intervals between consecutive keys in a leaf as \emph{gaps}; thus we do not follow the convention of \cite{knuth} where the leaves really take the place of our gaps, and therefore cannot contain keys. 

Let $m\geq 1$. Following Knuth \cite[Section 6.2.4]{knuth}, a \emph{$B$-tree} of order $2m+1$ is a search tree satisfying the following properties: Every node contains at least $m$ and at most $2m$ keys, except for the root which contains at least $1$ and at most $2m$ keys. Moreover, the tree is balanced in the sense that all leaves have the same distance to the root. We remark that some authors (e.g. \cite{BM72}) refer to such a tree as a $B$-tree of order $m$ instead. 

$B$-trees can be constructed via the following \emph{insertion algorithm}: Given a $B$-tree and a key that is not already stored in the tree, place the key in the appropriate leaf and the appropriate position within the keys of the leaf. If, after this placement, the leaf still contains at most $2m$ keys, then we are done. Otherwise, we split the node containing $2m+1$ keys by moving the median key up into the parent node and grouping the lowest $m$ keys and the largest $m$ keys each in their own node. By doing this, it might now happen that the parent node contains $2m+1$ keys, in which case we again split it into two nodes of $m$ vertices and move the median key (of the parent node) up. This process may propagate all the way along the path from the leaf where we inserted the key to the root vertex, in which case we create a new root vertex above the old root, containing only a single key (the one that was the median among the $2m+1$ keys of the old root), and split the old root in two. Note that the latter case of splitting the root is the only situation in which the height of the $B$-tree can increase. 

For the purpose of this article, we are interested in $B$-trees up to isomorphism of rooted plane trees. Equivalently, we can represent an isomorphism class by replacing all keys by dots, as in Figure~\ref{fig:bijection}(left). For brevity's sake, we will henceforth use $B$-tree to mean such an isomorphism class. An alternative way to think about these isomorphism classes is to fix the keys instead, e.g. by saying the keys are the set $\{1,\dots,n\}$---the disadvantage of this approach being that inserting another key means having to reassign the values of some of the old keys. Nonetheless, we will make use of both of these representations.

\subsection{Main results}

Let $T_n$ be a $B$-tree of order $2m+1$ containing $n$ keys. A \emph{history} of $T_n$ is a finite sequence $(T_1,\dots,T_n)$ of $B$-trees of order $2m+1$ such that for all $i=2,\dots,n$, the tree $T_i$ is obtained from $T_{i-1}$ through inserting a single key using the insertion algorithm outlined above. In particular, $T_i$ contains $i$ keys. We denote by $\msc H_m(T_n)$ the set of all histories of $T_n$, and by $\msc H_m(n)$ the set of all histories of any $B$-tree of order $2m+1$ with $n$ keys. In other words, $\msc H_m(n)=\bigcup_{T_n} \msc H_m(T_n)$, where the union is taken over all (non-isomorphic) $B$-trees of order $2m+1$ with $n$ keys. 

We can now state our main result:

\begin{theorem}\label{thm:main}
 Let $n,m\geq 1$. There is a bijection between $\msc H_m(n)$ and the set of all trees $H_n$ satisfying the following properties:
 \begin{enumerate}[(i)]
  \item $H_n$ is a rooted plane tree on $n$ vertices, labelled by $\{1,\dots,n\}$, such that along each path from the root to a leaf, the labels are increasing. 
  \item The vertices of $H_n$ at heights $2m, 3m+1, 4m+2, \dots$ have up to two children, all other vertices have at most one child. 
 \end{enumerate}
\end{theorem}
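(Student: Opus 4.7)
The plan is to construct an explicit bijection $\phi\colon \msc H_m(n) \to \{H_n\}$ by building $H_n$ vertex by vertex in lockstep with the $B$-tree evolution. The driving invariant is a correspondence, maintained at every time step $i$, between the leaves of $T_i$ and the ``open slots'' of the partially built tree $H_i$---where an open slot is an unfilled child position of an already-placed vertex, with left and right positions distinguished only at the special heights $2m,3m+1,4m+2,\ldots$ (non-special vertices admit only a single, undifferentiated child slot).

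To define $\phi$, I would process the insertions in order. At step $i$, comparing $T_{i-1}$ and $T_i$ as plane trees identifies the leaf $\ell$ of $T_{i-1}$ into which the $i$th key was inserted, independently of the position within $\ell$ (since all such insertions yield the same isomorphism class $T_i$). Place vertex $i$ at the open slot corresponding to $\ell$, then update the correspondence: if $\ell$ did not overflow, its new open slot is the unique child slot of vertex $i$; if $\ell$ overflowed and split into two new leaves, these take over the left and right child slots of vertex $i$. Any propagation of overflow through internal $B$-tree nodes rearranges internal structure but creates no new leaves, so it leaves the correspondence intact. The inverse map is defined symmetrically: processing $H_n$ in order of label, the slot occupied by vertex $i$ dictates the leaf to insert into, and the $B$-tree evolves via the standard insertion algorithm.

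The key technical step is a lifespan calculation pinning overflow events to placements at special heights. The initial leaf absorbs $2m+1$ keys before splitting, so during its lifespan vertices are placed at heights $0,1,\ldots,2m$, with overflow at height exactly $2m$. A subsequently born leaf holds $m$ keys (half of a parent split) and absorbs $m+1$ more insertions before overflowing; born one level below its parent's split point, its own split occurs at height (previous special height)$\,+(m+1)$. Iterating yields split heights $2m,3m+1,4m+2,\ldots$, exactly matching condition (ii). Condition (i) is automatic since each vertex is placed as a descendant of a previously placed (and hence lower-labeled) vertex.

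The main obstacle is verifying the leaf-to-slot invariant rigorously: that vertex $i$ is always placed at a currently open slot, that a leaf overflow happens precisely when vertex $i$ is at a special height (so that two child slots are indeed available for the two new leaves), and that the non-special one-child constraint is never violated. Once this is established by induction on $n$, it follows both that $\phi$ lands in the set of trees satisfying (i), (ii) and that $\phi$ and its inverse really are mutually inverse, concluding the proof.
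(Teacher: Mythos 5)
Your proposal is correct and takes essentially the same approach as the paper: the ``open slots'' you maintain are the paper's external vertices, the leaf-to-slot correspondence is exactly Proposition~\ref{prop:bijection}(i), and your lifespan calculation (initial leaf splits at height $2m$, subsequent leaves at height $+(m+1)$ each) reproduces the content of Proposition~\ref{prop:bijection}(iii), which the paper establishes by the same induction you gesture at.
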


We will call trees $H_n$ satisfying properties (i) and (ii) in the theorem $(2m+1)$-\emph{historic} (or just \emph{historic}, if it is not ambiguous) in the interest of brevity. Given a historic tree $H$ on $n$ vertices, it will be useful throughout to consider all potential positions for attaching a vertex $n+1$ that lead to another historic tree. We think of these positions as \emph{external vertices}, and call the vertices in $H$ \emph{internal} to tell them apart. We also write $\bar H$ to denote $H$ together with the external vertices. Furthermore, we call the internal vertices at height $2m, 3m+1, 4m+2,\dots$ \emph{branchings} (irrespective of how many internal children they have). 

\begin{proposition}\label{prop:bijection}
 Let $H_n$ be the $(2m+1)$-historic tree corresponding to a history $(T_1,\dots,T_n)$ of $B$-trees of order $2m+1$ under the bijection in Theorem~\ref{thm:main}. Then, the following holds: 
 \begin{enumerate}[(i)]
  \item For any $n\geq 1$, the number of external vertices of $H_n$ equals the number of leaves of $T_n$. 
  \item For any $n\geq 1$, the number of branchings in $H_n$ equals the number of keys in $T_n$ that are not stored in leaves. 
  \item Let $n\geq 2m+1$. Consider the $i$-th external vertex $v$ of $H_n$ from the left, and let $s$ be the number of internal vertices in $H_n$ strictly between $v$ and the closest branching above~$v$. Then, the $i$-th leaf of $T_n$ from the left contains exactly $m+s$ keys. 
 \end{enumerate}
\end{proposition}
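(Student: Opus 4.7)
The plan is to prove all three parts simultaneously by induction on $n$, leveraging the recursive nature of the bijection from Theorem~\ref{thm:main}: $H_n$ arises from $H_{n-1}$ by attaching vertex $n$ to the external vertex associated with the leaf of $T_{n-1}$ that receives the $n$-th key. For all $n\leq 2m$, no split has yet occurred, so $T_n$ is a single node with $n$ keys (one leaf, zero non-leaf keys), while $H_n$ is a chain of $n$ vertices with a single external vertex at its bottom (zero branchings, one external vertex). This verifies (i) and (ii) directly, and (iii) is vacuous since $n<2m+1$.

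For the inductive step, I would let $v$ be the external vertex of $H_{n-1}$ at which vertex $n$ is attached, and let $\ell$ be the corresponding leaf of $T_{n-1}$ (which exists by the inductive hypothesis for (i), and is the unique leaf when $n-1\leq 2m$). Write $s$ for the parameter of $v$ in (iii); when $n-1\geq 2m+1$, the inductive hypothesis gives that $\ell$ contains $m+s$ keys, while the smaller cases can be checked by hand. The analysis then splits into two cases, depending on whether $n$ becomes a branching.

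If $v$ is not at a branching level, then $s<m$, so $\ell$ has fewer than $2m$ keys and the insertion triggers no split; thus $T_n$ differs from $T_{n-1}$ only in that $\ell$ now holds $m+s+1$ keys, while in $H_n$ the external vertex $v$ is replaced by a single new external vertex below $n$ with parameter $s+1$. The left-to-right orderings and all four counts are preserved. If instead $v$ is at a branching level, then $s=m$ and $\ell$ has exactly $2m$ keys; insertion overflows $\ell$ and triggers a split that may propagate upward. The leaf $\ell$ splits into two leaves of $m$ keys each at the same position, while $v$ is replaced in $H_n$ by the two external children of the new branching $n$, each with parameter $s=0$. Hence (iii) continues to hold, both the external-vertex count and the leaf count grow by one, the branching count grows by one, and the non-leaf key count grows by one.

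The main obstacle will be verifying in the split case that the upward-propagating cascade changes neither the leaf count (beyond the one new leaf at the bottom) nor the non-leaf key count (beyond the single median promoted from the leaf-level split). I plan to handle this via the observation that each internal split replaces one node containing $2m+1$ keys by two nodes of $m$ keys plus one key promoted into the parent, preserving the total number of keys involved. Consequently, the sole net increase in non-leaf keys comes from the initial leaf split, matching the single new branching $n$ in $H_n$, and the induction closes.
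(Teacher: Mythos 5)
Your proof is correct and takes essentially the same approach as the paper: handle $n \le 2m$ directly, then induct on $n$ with a case split determined by whether the new vertex becomes a branching (equivalently, whether the target leaf holds $2m$ keys, which is how the paper phrases it), using (iii) for $H_{n-1}$ to align the two sides. The only stylistic differences are that the paper establishes (i) $\Leftrightarrow$ (ii) up front by a short counting argument and then inducts only on (i) and (iii), and it leaves the split-propagation analysis implicit where you spell it out.
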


We dedicate Section~\ref{section:bijection} to the proof of Theorem~\ref{thm:main} and Proposition~\ref{prop:bijection}. That section will also contain the description of the bijection. In Section~\ref{section:perm}, we exhibit a recursive construction of $\permset(H_n)$, the set of all permutations $\pi\in S_n$ that, when used as key sequence for a $B$-tree, lead to the history described by the historic tree $H_n$. As part of this description, we obtain the following result:

\begin{proposition}\label{prop:permnumber}
 Let $H_n$ be a $(2m+1)$-historic tree having $b\geq 1$ branchings. Let $s_1,\dots,s_{b+1}$ be the number of internal vertices in $H_n$ strictly between the $i$-th external vertex and its closest branching. Then
 \begin{equation}\label{eq:permnumber}
  |\permset(H_n)|=\left(\frac{(2m+1)!}{(m!)^2}\right)^b\cdot \prod_{i=1}^{b+1} (m+s_i)!.
 \end{equation}
\end{proposition}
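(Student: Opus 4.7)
The plan is to prove the identity by induction on $n$, reading $H_n$ as the result of adjoining vertex $n$ to the smaller historic tree $H_{n-1}$. Since the labels along every root-to-leaf path of $H_n$ are strictly increasing, the vertex labelled $n$ must be a leaf of $H_n$, and deleting it produces a historic tree $H_{n-1}$ on $\{1,\dots,n-1\}$ corresponding to the first $n-1$ steps of the history; let $v^\ast$ denote the external vertex of $H_{n-1}$ at the former position of $n$, and $\ell^\ast$ the corresponding leaf of $T_{n-1}$ via Proposition~\ref{prop:bijection}(i). Any permutation $\pi \in \permset(H_n)$ decomposes canonically, by restricting to its first $n-1$ entries and replacing each value by its rank, into a permutation $\pi' \in \permset(H_{n-1})$ paired with a value $\pi(n) \in \{1,\dots,n\}$; hence $|\permset(H_n)| = f \cdot |\permset(H_{n-1})|$, where $f$ is the number of admissible choices of $\pi(n)$.

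The key structural observation, which I expect to be the main technical obstacle, is that all gaps within a single leaf of $T_{n-1}$ produce isomorphic $B$-trees $T_n$. When no overflow occurs this is immediate, and when a (possibly cascading) split is triggered, the splitting procedure deterministically replaces each overflowing node by the canonical $m$-median-$m$ configuration regardless of which gap received the new key; distinct leaves, on the other hand, sit in distinct positions of the plane-ordered tree and thus give rise to non-isomorphic outcomes. Consequently $f$ equals the number of gaps in $\ell^\ast$, which is one more than the number of keys stored in $\ell^\ast$; by Proposition~\ref{prop:bijection}(iii) applied to $H_{n-1}$ (or elementary inspection when $H_{n-1}$ is a single-node tree without branchings), this yields $f = m + s^\ast + 1$, where $s^\ast$ is the tail length of $v^\ast$ in $H_{n-1}$.

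I would take $n = 2m+1$ as the base case: the only historic tree on $2m+1$ vertices with a branching is the path ending at a branching of height $2m$, and every permutation of $\{1,\dots,2m+1\}$ first fills the root node and then forces its split, so $|\permset(H_{2m+1})| = (2m+1)! = \bigl(\tfrac{(2m+1)!}{(m!)^2}\bigr)(m!)^2$, matching the formula. For the inductive step I distinguish two cases. If vertex $n$ is not a branching, then $H_{n-1}$ retains the same $b$ branchings as $H_n$; exactly one tail length in $H_{n-1}$ is $s_{j^\ast}-1$, where $s_{j^\ast}$ is the tail length of the external child of $n$ in $\bar H_n$, all others are unchanged, and multiplying by $f = m + s_{j^\ast}$ converts $(m+s_{j^\ast}-1)!$ into $(m+s_{j^\ast})!$, yielding the desired formula. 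If vertex $n$ is a branching, then the condition $n \geq 2m+2$ forces $b \geq 2$ (since any branching at height $\geq 3m+1$ necessarily has a branching ancestor at height $2m$), $H_{n-1}$ has $b-1$ branchings, and the two length-$0$ tails of $H_n$ attached to vertex $n$ merge into a single length-$m$ tail of $H_{n-1}$; the factor $f = 2m+1$ together with the identity $(m!)^2 \cdot \tfrac{(2m+1)!}{(m!)^2} = (2m+1)!$ absorbs the lost branching and the collapsed pair of tails into the target product, completing the induction.
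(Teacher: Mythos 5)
Your proof is correct, and it takes a genuinely different route from the paper. The paper derives \eqref{eq:permnumber} by analysing the choices made in Step~3 of its reconstruction algorithm: each branching contributes a factor $(2m+1)\binom{2m}{m}=\frac{(2m+1)!}{(m!)^2}$ from the free choice of position for $\mathcal K_1$ and of the small/large split among the first $2m+1$ slots, while each terminal range $\mathcal R$ of size $m+s_i$ contributes $(m+s_i)!$ free choices. You instead argue by induction on $n$, peeling off the last-inserted vertex: the bijection $\pi \leftrightarrow (\pi',\pi(n))$ between $\permset(H_n)$ and $\permset(H_{n-1})\times\{\text{admissible final keys}\}$ gives $|\permset(H_n)| = (m+s_{j^\ast})\cdot|\permset(H_{n-1})|$, and you then verify that this factor exactly accounts for the change in the right-hand side of \eqref{eq:permnumber}, including the case where vertex $n$ is a branching (factor $2m+1 = \frac{(2m+1)!}{(m!)^2}\cdot\frac{(m!)^2}{(2m)!}$). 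Your inductive step correctly invokes the fact that all gaps within a fixed leaf yield the same $T_n$ up to plane isomorphism (so $f$ equals the number of keys in the target leaf plus one, i.e.\ $m+s^\ast+1=m+s_{j^\ast}$ via Proposition~\ref{prop:bijection}(iii)), and your bookkeeping of tail lengths between $H_{n-1}$ and $H_n$ is accurate. The trade-off: the paper's proof is a short corollary of the Step~3 machinery already established, whereas your argument is self-contained and more elementary, relying only on the bijection and Proposition~\ref{prop:bijection} rather than on Lemmas~\ref{lemma:digraph} and~\ref{lemma:step3}; the price is the need to explicitly establish the structural observation about gaps within a leaf, which the algorithmic proof sidesteps.
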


This formula is somewhat reminiscent of the classical hook length formula, see e.g.~\cite[Section 5.1.4, Exercise 20]{knuth}: the number of increasing labellings of a tree with $n$ vertices is given by
\begin{equation*}
n! \prod_v \frac{1}{N_v},
\end{equation*}
where the product is over all vertices and $N_v$ is the number of vertices in the subtree consisting of $v$ and all its descendants.

\begin{remark}
 It is possible to consider $B$-trees of order $2m$ as well, where a node splits whenever it is assigned $2m$ keys. In that case, the smallest $m-1$ keys end up in the left node, the $m$-th key is pushed into the parent node, and the largest $m$ keys end up in the right node. It is still possible to define suitable $(2m)$-historic trees, but the distance between a branching and the next branching below will depend on whether we go to the left or to the right in $H_n$. 
\end{remark}

\section{The bijection}\label{section:bijection}

\begin{figure}
 \includegraphics[width=\textwidth]{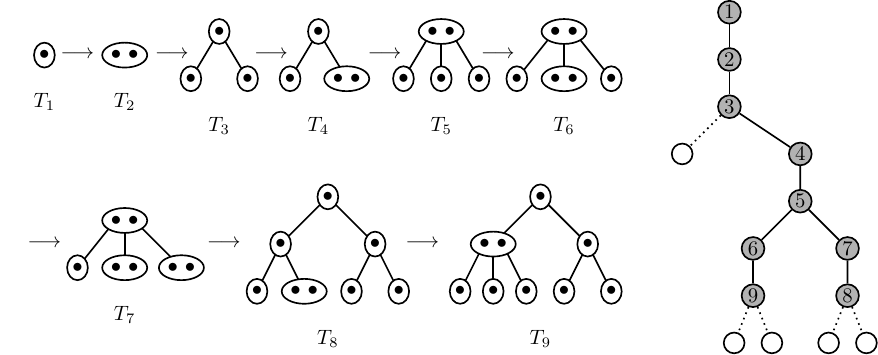}
 \caption{A history of $B$-trees of order $2m+1=3$ on the left, with the corresponding historic tree $H_9$ shown on the right. The external vertices of $H_9$ are shown in white, and are connected by dotted lines. The vertices $3,5,8$, and $9$ are the branchings of $H_9$.}\label{fig:bijection} 
\end{figure}

The purpose of this section is to prove Theorem~\ref{thm:main}. 

We begin by describing the bijection; see Figure~\ref{fig:bijection} for an example. If $n=1$, there is only one $B$-tree and only one corresponding $H_1$. For an arbitrary history $(T_1,\dots,T_n)\in \msc H_m(n)$, construct the corresponding $H_n$ as follows: Assume we already constructed $H_k$ corresponding to the history $(T_1,\dots,T_k)\in \msc H_m(k)$ for some $1\leq k< n$. Then $T_{k+1}$ is obtained from $T_k$ by inserting a single new key. If this insertion takes place in the $i$-th leaf (counted from left to right) of $T_k$ before accounting for possible splits, then we attach the vertex labelled $k+1$ to $H_k$ at the $i$-th external vertex of $H_k$ (counted from left to right). This gives $H_{k+1}$, and inductively, $H_n$. 
 
Conversely, given some historic $H_n$, we can construct trees $H_1,\dots,H_{n-1}$ such that $H_k$ is the subtree consisting of the vertices with label $\leq k$. Suppose that the vertex $k+1$ is attached to $H_k\ssq H_{k+1}$ in the $i$-th external vertex of $H_k$, and suppose we have already constructed the history $(T_1,\dots,T_k)$ corresponding to $H_k$. Then we can extend this history to the one corresponding to $H_{k+1}$ by inserting a key into the $i$-th leaf from the left of $T_k$, and let $T_{k+1}$ be the $B$-tree obtained by this (possibly after performing the necessary splits). 
 
It is clear from the description that this gives inverse maps between $\msc H_m(n)$ and $(2m+1)$-historic trees on $n$ vertices, provided the constructions are at all well-defined. This is the case if the number of external vertices on $H_k$ equals the number of leaves of $T_k$, which is exactly claim (i) in Proposition~\ref{prop:bijection}. Thus we proceed by proving Proposition~\ref{prop:bijection}, which will imply Theorem~\ref{thm:main}. 

\begin{proof}[Proof of Proposition~\ref{prop:bijection}]
 We first note that (i) is equivalent to (ii). Indeed, since all non-branchings have outdegree 1 in $\overline{H_n}$, and the branchings have outdegree exactly 2, the number of branchings is one less than the number of external vertices. Similarly, it is a simple consequence of the insertion algorithm for $B$-trees that every key that gets moved out of a leaf by a split increases the number of leaves by one, so that the number of keys not stored in leaves is one less than the number of leaves. 
 
 Next, we observe that (i) holds for $n\leq 2m$. This is the case since any $B$-tree of order $2m+1$ for those values of $n$ only has a single node (which is simultaneously root and leaf), and all vertices with these labels in $H_n$ necessarily have outdegree 1. We proceed by induction on $n$. 
 
 For $n=2m+1$, we see the first split in $T_n$, leading to a root node containing a single key, and two leaves containing $m$ keys each. For $H_n$, we have now reached height $2m$, and thus have two external vertices -- these are the children of a branching in $\overline{H_n}$, thus there are no internal vertices between them and the branching. This establishes both (i) and (iii) for $n=2m+1$. 
 
 Now assume that (i) and (iii) hold for some $n\geq 2m+1$, and that we obtain $T_{n+1}$ from $T_n$ by adding a key to the $i$-th leaf, which held $m+s$ keys in $T_n$. We distinguish two cases for $s$:
 
 For $0\leq s\leq m-1$, we end up with $m+s+1$ keys in the $i$-th leaf of $T_{n+1}$, and the number of leaves does not change. For $H_n$, we need to append the vertex labelled $n+1$ in place of the $i$-th external vertex. Denote by $w$ the closest branching to $n+1$ (i.e., the most recent predecessor that is a branching; such a vertex exists since $n\geq 2m+1$). By assumption, there are exactly $s$ vertices strictly between $w$ and $n+1$, so $n+1$ is not a branching, and only has a single external child (and the path from that external vertex to $w$ is one vertex longer). Thus, properties (i) and (iii) hold for $n+1$. 
 
 If, on the other hand, $s=m$, then adding the key splits the $i$-th leaf; producing two leaves in its stead that each hold $m$ keys. For $H_n$, we denote again by $w$ the closest branching to the $i$-th external vertex which becomes the position of the new vertex $n+1$.  Invoking (iii) for $H_n$ shows that there are $m$ vertices between $n+1$ and $w$, so $n+1$ is another branching and therefore has two new external vertices as children, replacing the old one. The closest branching to the new external vertices is now $n+1$, and there are no internal vertices strictly between them, which again corresponds to the number of keys in the new vertices. This shows that splits in the $B$-tree correspond to branchings in the historic tree and asserts (i) and (iii) for $n+1$, finishing the induction argument.  
\end{proof}

\section{The permutations associated with a history}\label{section:perm}

Let $T$ be a $B$-tree of order $2m+1$, containing $n$ keys. We denote by $\permset(T)$ the set of all permutations $\pi\in S_n$ that, when used as a key sequence for the insertion algorithm, yield the tree $T$. The aim of this section is to give a recursive description of $\permset(T)$ in terms of the ``trimmed'' tree $T^{(1)}$ that is obtained from $T$ by deleting all leaves. For this purpose, write $n_1$ for the number of keys stored in $T^{(1)}$. We will rely on the following observation:

Consider a history $(T_1,\dots,T_n=T)$. Let $i_1<i_2<\dots<i_{n_1}$ be those $i$ where $T_i$ was obtained from $T_{i-1}$ by inserting a key that led to a split (note that this is consistent with the indexing). Then $\big(T^{(1)}_{i_1},\dots,T^{(1)}_{i_{n_1}}\big)$ is a valid history of $T^{(1)}$. We remark that this is a purely combinatorial statement: If we instead looked at i.i.d. keys sampled from some continuous probability distribution, then the processes $(T_n)_{n\geq 1}$ and $\big(T^{(1)}_{i_n}\big)_{n\geq 1}$ would be quite different!

As a consequence, suppose we are given a permutation $\pi\in\permset(T)$. This $\pi$ produces a history $(T_1,\dots,T_n)$. Moreover, keeping track of the actual keys, we obtain a sequence $K_{i_1},\dots,K_{i_{n_1}}$ of those keys that ascend above the leaves at times $i_1,\dots,i_{n_1}$. Forgetting about their actual values and only keeping track of the relative size of the $K_{i_j}$ then produces a new permutation $\pi^{(1)}\in\permset(T^{(1)})$, where, moreover, $\pi^{(1)}$ produces the history $\big(T^{(1)}_{i_1},\dots,T^{(1)}_{i_{n_1}}\big)$. This defines a map $\Psi_T:\permset(T) \to \permset(T^{(1)}), \pi\mapsto \pi^{(1)}$, and our goal will be to invert this: Given a $\pi^{(1)}$, we want to find all $\pi\in S_n$ that lead to such $\pi^{(1)}$. 

This inversion will come in the form of a 3-step algorithm, described in detail below, in Section~\ref{subsection:alg}. However, we will give a high-level overview now:
\begin{enumerate}
 \item In the first step, we start from a given $\pi^{(1)}\in\permset(T^{(1)})$, and lift it to a sequence $(K_{i_1},\dots,K_{i_{n_1}})$ as above. 
 \item In the second step, we use $\pi^{(1)}$ and Proposition~\ref{prop:bijection} to construct an acyclic digraph $G=G(T,\pi^{(1)})$. Lemma~\ref{lemma:digraph} states that the set of topological labellings of $G$ corresponds bijectively to the set of historic trees of $T$ that produce $\pi^{(1)}$. 
 \item Therefore, in the third step, we can fix a historic tree $H$ obtained from step 2, and restrict our attention to $\permset(H)$. The algorithm given will produce an arbitrary element of $\permset(H)$ after making a sequence of choices; different choices will lead to different permutations, and going over all permitted choices produces the entire set, see Lemma~\ref{lemma:step3}. In more concrete terms, we start step 3 with an ``empty'' permutation consisting of $n$ blank symbols, and by recursively comparing it against $K_{i_1},\dots, K_{i_{n_1}}$ and $H$ we will replace the blanks by entries from $\{1,\dots,n\}$. 
\end{enumerate}

\subsection{Preparatory lemmas}

To ensure well-definedness at a later point (Lemma~\ref{lemma:digraph}), we need the following lemma:
\begin{lemma}\label{lemma:psihat}
 There is a well-defined map $\hat\Psi_T:\msc{H}_m(T) \to \permset(T^{(1)})$ that assigns to a history $(T_1,\dots,T_n=T)$ the $\pi^{(1)}$ constructed above, where $\pi\in\permset(T)$ is any permutation producing the history. 
\end{lemma}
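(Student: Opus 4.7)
The goal is to show that the map $\pi \mapsto \pi^{(1)}$ factors through $\msc H_m(T)$; that is, any two permutations in $\permset(T)$ that produce the same history $(T_1, \dots, T_n)$ yield the same image $\pi^{(1)}$. The split times $i_1<\dots<i_{n_1}$ are intrinsic to the history: an index $i$ lies in this set precisely when $T^{(1)}_i$ contains more keys than $T^{(1)}_{i-1}$, a property visible from the sequence of isomorphism classes alone. Hence the essential task is to prove that the relative order of the promoted keys $K_{i_1}, \dots, K_{i_{n_1}}$ depends only on the history.

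The plan is to recast the $B$-tree insertion process as purely plane-tree combinatorics, by tracking at each time $i$ a label on every slot of every internal node of $T_i$ recording the split time at which the key currently occupying that slot first entered $T^{(1)}$. When a leaf splits at time $i_\ell$, the promoted key $K_{i_\ell}$ enters its parent node $v$ at a slot dictated entirely by the plane-tree position of the splitting leaf among $v$'s children; this placement, together with the induced shifts of the labels previously attached to slots of $v$, is determined by the history. When $v$ itself undergoes a split later on, the key promoted out of $v$ is whichever one sits at slot $m+1$, and the labelling records which $i_\ell$ this corresponds to---once again combinatorially, without reference to actual key values. Propagating this bookkeeping through cascades of splits, each $K_{i_\ell}$ can be assigned a well-defined final resting slot in $T^{(1)}$ that is a function of the history alone.

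Since the in-order traversal of $T^{(1)}$ visits its keys in sorted order, the rank of $K_{i_\ell}$ among $\{K_{i_1},\dots,K_{i_{n_1}}\}$ coincides with the in-order rank (within $T^{(1)}$) of its final resting slot. Thus $\pi^{(1)}(\ell)$ is determined by the history, so $\hat\Psi_T$ is well-defined. The codomain is $\permset(T^{(1)})$ by the observation recorded just before the lemma statement, which guarantees that $\pi^{(1)}$ produces the history $(T^{(1)}_{i_1},\dots,T^{(1)}_{i_{n_1}})$.

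The main obstacle I anticipate is formalising the slot-bookkeeping cleanly, particularly when a single insertion in $T$ triggers a cascade of splits propagating through several levels of $T^{(1)}$. One must verify at each stage of such a cascade that the median-selection and the subsequent re-labelling of slots depend only on the plane-tree structure and the existing labels, never on actual key values. Intuitively this is automatic, because every elementary operation---choosing the splitting leaf, identifying the median slot, re-indexing sibling positions---is already stated in plane-tree language; but I expect a short inductive argument or an explicit algorithmic description will be required to make this airtight.
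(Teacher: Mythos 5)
Your proposal is correct and essentially follows the same route as the paper: both arguments inductively reconstruct, from the sequence of plane-tree isomorphism classes alone, the contents of $T^{(1)}_j$ over time, exploiting that which leaf split, where the promoted key lands in its parent, and how a cascade propagates are all determined by the plane-tree data. The paper makes the bookkeeping slightly crisper by fixing the keys of $T_j$ to be $\{1,\dots,j\}$ in left-to-right order and tracking key values directly (so the promoted key is computed as $K+m+1$ with $K$ the largest $T^{(1)}_j$-key to the left of the splitting leaf, and rank information is carried automatically); your variant instead labels slots by split times and recovers ranks via an in-order pass at the end. These are two encodings of the same induction.
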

\begin{proof}
 We will assume that the keys in $T_j$ are exactly $1,\dots,j$ (labelled from left to right, since $T_j$ is a search tree) for $1\leq j\leq n$, and relabel them accordingly whenever we insert a new key. We show inductively that we can (a) determine uniquely which key moved up from the leaves at the times $i_1,i_2,\dots$ and (b) keep track of how the keys in $T^{(1)}_{j-1}$ change as we go to $T^{(1)}_j$. For $T_1,\dots,T_{2m}$, there is nothing to show. In $T_{2m+1}$, we know that the unique key in the root node has label $m+1$. Suppose we have verified (a) and (b) for some $j\geq 2m+1$. If $j+1$ is not one of $i_1,i_2,\dots,i_{n_1}$ then no splits happen, and comparing $T_{j+1}$ with $T_j$ reveals which leaf grew by one. All keys in $T^{(1)}_j$ to the right of that leaf are increased by 1 for $T^{(1)}_{j+1}$, all other keys in $T^{(1)}_j$ remain the same. If on the other hand $j+1$ is one of $i_1,i_2,\dots,i_{n_1}$ then comparing $T_{j+1}$ and $T_j$ reveals which leaf of $T_j$ split. As before, all keys in $T^{(1)}_j$ to the right of that leaf are increased by 1 for $T^{(1)}_{j+1}$, all other keys in $T^{(1)}_j$ remain the same. Moreover, let $K$ be the largest key in $T^{(1)}_j$ to the left of the splitting leaf. Then the new key introduced to $T^{(1)}_{j+1}$ will be $K+m+1$, and it will be the unique key in $T_{j+1}$ that is placed between the two leaves coming from the split leaf. 
 
 Thus, only from the history of $T_n$ we can keep track of which keys were introduced to $T^{(1)}_j$ in which order, which yields $K_{i_1},\dots,K_{i_{n_1}}$ after updating all the keys and thus $\pi^{(1)}$.
\end{proof}

We also note the following simple fact about the bijection from Theorem~\ref{thm:main}:
\begin{lemma}\label{lemma:Hfact}
 Let $H_n$ be the historic tree for $(T_1,\dots,T_n)$. Suppose that vertex $i$ of a historic tree $H$ is a branching, and suppose that the key that is pushed upwards from the splitting leaf at that time is $K_i\in\{1,\ldots,n\}$ in $T_n$. Let $j\in\{i+1,\dots,n\}$ be another vertex of $H$, and let $k_j$ denote the key added at time $j$ in the history. Then, $k_j> K_i$ if and only if $j$ is positioned to the right of $i$ in $H$ (not necessarily as a descendant of $i$), and $k_j< K_i$ otherwise. Moreover, if $j>i$ is another branching of $H$, then also $K_j>K_i$ if and only if $j$ is to the right of $i$, and $K_j<K_i$ otherwise. 
\end{lemma}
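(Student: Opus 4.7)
The central idea is that the bijection between external vertices of $\overline{H_k}$ and leaves of $T_k$ constructed in Section~\ref{section:bijection} pairs the $\ell$-th external vertex from the left with the $\ell$-th leaf from the left; this is built into the construction, since adding a vertex at the $\ell$-th external vertex of $H_k$ corresponds to inserting a key into the $\ell$-th leaf of $T_k$. Moreover, since a $B$-tree is a search tree, once $i$ becomes a branching the key $K_i$ occupies an internal position in every subsequent $T_\ell$ ($\ell\geq i$), and the leaves of $T_\ell$ partition cleanly into those to the left of $K_i$ (containing only keys less than $K_i$) and those to the right (containing only keys greater than $K_i$). The plan is to verify that the analogous left/right partition of external vertices in $\overline{H_\ell}$ relative to $i$ matches this leaf partition under the bijection, and then to read off both claims at once.

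Concretely, I would prove by induction on $\ell\geq i$ the following invariant: the external vertices of $\overline{H_\ell}$ to the left of $i$ in the planar embedding are precisely those paired with leaves of $T_\ell$ lying to the left of $K_i$. The base case $\ell=i$ is immediate from how the step $H_{i-1}\to H_i$ and $T_{i-1}\to T_i$ are defined: the old external vertex at which $i$ is inserted becomes the branching $i$ with two new external children, while the corresponding leaf of $T_{i-1}$ splits into two new leaves separated by $K_i$; every other external vertex and leaf retains its relative position. For the inductive step I would replay the two cases from the proof of Proposition~\ref{prop:bijection}: if the insertion at time $\ell$ causes no split, one external vertex is replaced by $\ell$ together with a single external child in the same planar location, mirrored in $T$ by the corresponding leaf simply gaining a key; if $\ell$ becomes a branching, the external vertex becomes $\ell$ with two external children on the same side of $i$ as before, mirrored by the corresponding leaf splitting into two leaves on the same side of $K_i$.

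With the invariant established, both claims follow immediately. For the first, at time $j$ the key $k_j$ is inserted into the leaf paired with the external vertex at which vertex $j$ is placed, and the planar position of $j$ relative to $i$ (fixed forever the moment $j$ is added, since subsequent insertions never move existing vertices) matches the position of that leaf relative to $K_i$. For the second, if $j$ is itself a branching then $K_j$ is the median of the $2m+1$ keys in the leaf that splits at time $j$, all of which lie on one side of $K_i$; hence $K_j$ lies on the same side of $K_i$ as $j$ does of $i$.

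The main obstacle I expect is handling cascading splits during the inductive step: a single insertion may trigger several consecutive splits propagating up $T$, yet adds only a single vertex to $H$. I must verify that these higher-level splits do not disturb the left/right partition of leaves determined by $K_i$. This is ensured by the observation that a split merely regroups the keys of a node while preserving their global sorted order; consequently, the rank of $K_i$ among all keys of $T_\ell$---and therefore the partition of leaves around it---is preserved throughout any cascade, even if the internal node physically containing $K_i$ changes.
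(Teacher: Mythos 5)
Your proposal is correct and follows essentially the same approach as the paper's proof, which rests on the single observation that once $K_i$ is pushed up, the leaves of every subsequent $T_\ell$ partition into those to the left (all keys $<K_i$) and those to the right (all keys $>K_i$), combined with the fact that the bijection pairs the $\ell$-th external vertex of $\overline{H_\ell}$ with the $\ell$-th leaf of $T_\ell$. You have simply expanded the paper's one-sentence argument into an explicit induction and addressed the cascading-splits subtlety directly, which the paper leaves implicit.
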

\begin{proof}
 This follows from the observation that after the split pushes $K_i$ upwards, the leaves of the $B$-tree can be partitioned into those containing keys $<K_i$, which are therefore to the left, and those containing keys $>K_i$, which are further to the right, as well as from the description of the bijection given in Section~\ref{section:bijection}.
\end{proof}

\subsection{The algorithm}\label{subsection:alg}

We now turn our attention to the promised ``inverse'' of $\Psi_T$. Denote by $h$ the height of $T$. 

For $h=0$, the tree $T$ only consists of the root node, and then $\permset(T)=S_n$. For $h>0$, suppose we know $\permset(T^{(1)})$.

\emph{Step 1:} By performing an in-order traversal of the keys in $T$, we can see which of the numbers $1,\dots,n$ correspond to the keys in $T^{(1)}$. In other words, in-order traversal gives a monotone injection $\iota:\{1,\dots,n_1\} \hookrightarrow \{1,\dots,n\}$, by sending $i$ to the $j$ that is the $i$-th key from the left among those not in a leaf node of $T$. This injection in turn allows us to write any $\pi^{(1)}\in\permset(T^{(1)})$ as a sequence $\pi_\iota=\big(K_{i_1},\dots,K_{i_{n_1}}\big)$. 

\emph{Step 2:} We construct a rooted digraph $G=G(T,\pi^{(1)})$ in the following way: First, construct a binary search tree from $\pi^{(1)}$. Then, subdivide the edges (and move the root up) in such a fashion that the nodes of the binary search tree become the branchings of a historic tree and append extra vertices to match with the leaves of $T$, according to Proposition~\ref{prop:bijection}(iii). We then direct all edges away from the root, and consider the directed path $\pi^{(1)}(1)\longrightarrow \dots\longrightarrow \pi^{(1)}(n_1)$. Merge this path into the (mostly empty) historic tree by identifying the vertex $\pi^{(1)}(i)$ in the path with the vertex containing $\pi^{(1)}(i)$ in the tree, for all $i=1,\dots,n_1$. For bookkeeping, we colour the edges coming from the path red, and the edges from the tree black. Finally, delete all labels/keys from the resulting digraph $G$. 

\begin{lemma}\label{lemma:digraph}
 The digraph $G=G(T,\pi^{(1)})$ constructed in this fashion is acyclic. Furthermore, any topological labelling of $G$ (that is, any labelling such that all edges point towards the higher label) induces a historic tree $H$ for $T$ on the black edges. Such $H$ corresponds to those histories of $T$ that are obtained by $\pi\in S_n$ such that $\pi^{(1)}$ is the associated history of $T^{(1)}$. In other words, we have
 \[
  \{\text{topological labellings of }G(T,\pi^{(1)}) \} \stackrel{1:1}{\longleftrightarrow} \hat\Psi^{-1}(\pi^{(1)}),
 \]
 where the bijection is the one from Theorem~\ref{thm:main} after removing the red edges from $G$.
\end{lemma}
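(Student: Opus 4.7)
The plan is to prove the lemma in three main stages: acyclicity of $G$, well-definedness of the map that sends a topological labelling of $G$ to the labelled subtree of black edges, and finally that this map is a bijection onto $\hat\Psi^{-1}(\pi^{(1)})$.

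For acyclicity, I would introduce a strictly monotone potential on the vertices of $G$. For each vertex $v$, set $\psi(v)=(r(v),d(v))$, where $r(v)$ is the position in $\pi^{(1)}$ of the nearest branching weakly above $v$ in the black tree, and $d(v)$ is the number of black edges from $v$ up to that branching; order $\psi$-values lexicographically. Along a red edge the potential goes from $(i,0)$ to $(i+1,0)$. A black edge along a subdivision chain keeps $r$ constant and raises $d$ by one. A black edge from the last subdivision vertex of a chain down to the next branching jumps to a BST descendant of the chain's top branching; since binary search tree insertion always places new keys strictly below previously inserted ones, $r$ strictly increases. Hence $\psi$ strictly increases along every directed edge, and $G$ is acyclic.

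Given a topological labelling $\lambda$ of $G$, I would define $H=H(\lambda)$ as the black-edge subgraph equipped with the labels $\lambda$. By construction the shape of the black tree of $G$ is precisely that of a $(2m+1)$-historic tree whose external vertex positions and the numbers $s_i$ of internal vertices above them encode $T$ via Proposition~\ref{prop:bijection}(iii). The topological condition forces $\lambda$ to increase along every black edge, which is exactly condition (i) of Theorem~\ref{thm:main}. Thus $H$ is a historic tree and, via Theorem~\ref{thm:main}, corresponds to a history of $T$. The heart of the argument is then to show that $\hat\Psi$ of this history equals $\pi^{(1)}$, and conversely that every history in $\hat\Psi^{-1}(\pi^{(1)})$ arises from exactly one topological labelling. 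The crucial tool is Lemma~\ref{lemma:Hfact}: the relative $K$-order at the branchings of any historic tree is determined by their left--right arrangement, and the tree induced by branching ancestry in $H$ is therefore the BST built from the sequence of branching labels. The red edges of $G$ prescribe that branchings be labelled in the order $\pi^{(1)}(1),\dots,\pi^{(1)}(n_1)$, so the BST read off from $H$ coincides with the BST of $\pi^{(1)}$ used to build $G$; consequently $\hat\Psi(H)=\pi^{(1)}$. For the reverse direction, given $H\in\hat\Psi^{-1}(\pi^{(1)})$, I would transport its vertex labels to $G$ via the unique identification of the black tree of $G$ with the shape of $H$ (both are the historic shape of $T$), and check that the red edges are respected because the branchings of $H$ in label order realise exactly $\pi^{(1)}$; mutual invertibility of the two constructions is then clear.

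The main obstacle I anticipate is the identification in the third step: making precise that the branching substructure of any $H\in\hat\Psi^{-1}(\pi^{(1)})$ \emph{is} the BST of $\pi^{(1)}$, so that labels pulled back from $H$ to $G$ automatically respect the red edges. Once Lemma~\ref{lemma:Hfact} is applied carefully to translate between spatial left/right relations in $H$ and numerical comparisons of $K$-values, this identification becomes a routine unwinding of definitions, and the remaining bookkeeping is straightforward.
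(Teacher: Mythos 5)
Your proposal is correct and follows essentially the same route as the paper's proof: acyclicity from the fact that BST insertion places later keys strictly below earlier ones, and the bijection established in both directions by combining Lemma~\ref{lemma:Hfact} (left/right positions of branchings determine the relative order of the keys pushed up) with Proposition~\ref{prop:bijection}(iii) (the shape of the black tree encodes $T$). The one cosmetic difference is that you package the acyclicity argument as an explicit strictly increasing potential $\psi(v)=(r(v),d(v))$, whereas the paper argues by contradiction directly from the incompatibility of red-descendance ($i<j$) and black-descendance ($v_i$ below $v_j$); your version requires fixing a convention for $r$ on the initial stem above the first branching, but this is a trivial detail and does not affect the substance of the argument.
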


\emph{Step 3:} It remains to give the actual description of $\permset(T)$. Specifically, writing $\permset(H)$ for the set of $\pi\in S_n$ that produce the history encoded by the historic tree $H$, we can pick an $H$ coming from a topological labelling of $G(T,\pi^{(1)})$ and describe the corresponding $\permset(H)$. We are given the key sequence $\pi_\iota=(K_{i_1},\dots,K_{i_{n_1}})$ from Step 1, as well as a fixed topological labelling of $G$, with the induced historic tree $H$. 

In what follows, $p$ will be a sequence of distinct integers which is to be determined, thought of as a map onto some range $\mc R$. Furthermore $\mc H$ will be a historic tree on $|\mc R|$ vertices, and $\mc K$ is a subsequence of $\pi_\iota$ containing (in the same order) all those $K_{i_j}$ that appear in $\mc R$. Moreover, we demand that the length of the sequence $\mc K$ equals the number of branchings in $\mc H$. Then, the following recursive procedure constructs all desired $\pi$: 

\emph{Step 3.0:} Initialize $p=\pi$ as a yet undetermined permutation in $S_n$, thus $\mc R=\{1,\dots,n\}$. Further, set $\mc H=H$ and $\mc K=\pi_\iota$. 

\emph{Step 3.1:} If $|\mc R|\leq 2m$, let $p$ be an arbitrary bijection onto $\mc R$. Otherwise, choose an arbitrary position $1\leq j_1\leq 2m+1$ to place $\mc K_1$, the first element of $\mc K$ (i.e., fix $p(j_1)=\mc K_1$), mark $m$ additional positions among the first $2m+1$ of $p$ as \emph{small}, and the remaining $m$ as \emph{large}. For $j>2m+1$, mark the $j$-th entry of the permutation as \emph{small} if the vertex labelled $j$ is positioned to the left of the topmost branching in $H$, and as \emph{large} otherwise. 

\emph{Step 3.2:} Define new undetermined bijections $p^\pm$, where $p^+$ contains all the large positions of $p$, and $p^-$ all the small ones. These bijections will have the ranges $\mc R^+:=\mc R \cap \{\mc K_1+1,\dots,n\}$ and $\mc R^-:=\mc R\cap \{1,\dots,\mc K_1-1\}$, respectively. Moreover, let $\mc K^\pm$ be the subsequences of $\mc K$ containing, in the same order, the entries strictly larger/smaller than $\mc K_1$.   Also split $\mc H$ into $\mc H^\pm$ such that $\mc H^-$ contains the vertices labelling the small positions in $\pi$ and such that $\mc H^-$ below the $m$-th vertex is equal to the left subtree of $\mc H$ from Step 3.1. Construct $\mc H^+$ analogously, then relabel $\mc H^\pm$ with integers from $1,\ldots,|\mc H^\pm|$ while maintaining the relative order. 

\emph{Step 3.3:} Repeat steps 3.1--3.3 for both $(p^\pm,\mc R^\pm,\mc K^\pm,\mc H^\pm)$. 

\begin{lemma}\label{lemma:step3}
 If $H$ comes from a topological ordering of $G(T,\pi^{(1)})$ and $\pi_\iota$ is constructed from $T$ and $\pi^{(1)}$ as in step 1, then the following holds for step 3.2:
 \begin{enumerate}[(i)]
  \item $\mc K^\pm$ consists of those entries of $\pi_\iota$ that are contained in $\mc R^\pm$.
  \item $|\mc H^\pm|=|\mc R^\pm|\geq m$, and each of $\mc R^\pm$ is a set of consecutive integers.
  \item The lengths of $\mc K^\pm$ are equal to the number of branchings in $\mc H^\pm$. 
 \end{enumerate}
 Moreover, the set of permutations constructible with step 3 is $\permset(H)$.  
\end{lemma}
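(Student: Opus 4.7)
The plan is to verify the structural invariants (i)–(iii) by induction on the recursion depth, and then to prove the main claim $\permset(H) = \{\text{permutations output by step 3}\}$ by a separate induction on $|\mc R|$. The base case of the first induction is step 3.0, where $\mc R = \{1, \ldots, n\}$ is consecutive, $|\mc H| = n = |\mc R|$, and Proposition~\ref{prop:bijection}(ii) gives $|\mc K| = n_1$, the number of branchings of $H$. For the inductive step, (i) is immediate from the definitions, since both $\mc K$ and $\mc R$ are split according to whether an entry is smaller or larger than $\mc K_1$. For (iii), I would apply Lemma~\ref{lemma:Hfact} to the topmost branching $w$ of $\mc H$: the pushed-up keys associated with branchings of $\mc H$ lying to the left of $w$ are exactly the entries of $\mc K^-$, and these are precisely the branchings of $\mc H^-$ because the $m$ spine vertices sitting above the left subtree are not branchings (they lie at heights below $2m$). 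For (ii), the identity $|\mc H^-| = m + (\text{size of left subtree of } w) = |\mc R^-|$ likewise follows from Lemma~\ref{lemma:Hfact} combined with the observation that the keys smaller than $\mc K_1$ split as $m$ coming from the initial split plus later left-side insertions; consecutivity of $\mc R^\pm$ is inherited from that of $\mc R$ since $\mc K_1 \in \mc R$; and $|\mc R^\pm| \geq m$ reflects the $B$-tree axiom that the non-root nodes sitting immediately below $\mc K_1$ each contain at least $m$ keys.

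For the main claim I would induct on $|\mc R|$. If $|\mc R| \leq 2m$, then $\mc H$ is a single root-to-leaf path with no branchings, the corresponding sub-$B$-tree is a single leaf, every bijection onto $\mc R$ produces this same leaf, and step 3.1 outputs exactly these bijections. For the inductive step, I would establish a bijection
\begin{equation*}
 \permset(H) \;\longleftrightarrow\; (\text{valid step-3.1 choices}) \times \permset(\mc H^-) \times \permset(\mc H^+).
\end{equation*}
Given $\pi \in \permset(H)$, the median of the first $2m+1$ entries of $\pi$ is forced to be $\mc K_1$ (the key pushed up by the very first split), so its position in $\pi$ determines $j_1$ and the small/large marking of the first $2m+1$ positions; Lemma~\ref{lemma:Hfact} then forces, for $j > 2m+1$, the marking of the $j$-th position to agree with the planar position of vertex $j$ relative to $w$ in $H$, yielding a valid step-3.1 choice. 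Conversely, one verifies directly that assembling a permutation from any such choice together with $p^\pm \in \permset(\mc H^\pm)$ produces a $\pi$ whose associated history is $H$.

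The main obstacle, and the step where the induction genuinely feeds on its own hypothesis, is the claim that the restricted sequences $p^\pm$ — read under the order-preserving relabelling onto $\{1, \ldots, |\mc R^\pm|\}$ — lie in $\permset(\mc H^\pm)$. For $p^-$ the essential point is that the substructure of $T$ formed by the keys smaller than $\mc K_1$ evolves, along the $\pi$-history, exactly as a fresh $B$-tree would evolve under the insertion sequence $p^-$. At step $2m+1$ of the $\pi$-history, the first split deposits the $m$ small values among the first $2m+1$ entries of $\pi$ into a single leaf, matching the state of the fresh $B$-tree after its first $m$ insertions; thereafter every insertion of a value below $\mc K_1$ lands in the corresponding leaf in both processes, since the insertion rule depends only on the relative rank of the new key among those already present, and subsequent splits inside the left substructure never affect $\mc K_1$ or keys larger than it. Combining this with the explicit description of $\mc H^-$ in step 3.2 (a spine of $m$ vertices glued on top of the left subtree of $\mc H$'s topmost branching) and the uniqueness part of Theorem~\ref{thm:main}, the historic tree of the fresh history must coincide with $\mc H^-$, and the inductive hypothesis applied to $\mc H^\pm$ then closes the induction.
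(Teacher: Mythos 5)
Your proposal is correct and follows essentially the same strategy as the paper's proof: invariants (i)--(iii) are verified via Lemma~\ref{lemma:Hfact} and the definitions of $\mc R^\pm$, $\mc K^\pm$, $\mc H^\pm$, while the characterization of $\permset(H)$ is established by strong induction on the size $|\mc R|$, with the base case $|\mc R|\le 2m$ trivial and the inductive step built on the observations that $\mc K_1$ must be the median of the first $2m+1$ entries and that Lemma~\ref{lemma:Hfact} forces the small/large marking beyond position $2m+1$. The one place you deviate, invoking the $B$-tree node-size axiom to get $|\mc R^\pm|\ge m$, is sound but more roundabout than the paper's one-line counting argument ($\mc K_1$ is a median of $2m+1$ values from $\mc R$); your more explicit discussion of the sub-$B$-tree dynamics in the inductive step is a reasonable unpacking of what the paper leaves implicit in its appeal to Lemma~\ref{lemma:Hfact}.
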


\begin{remark}
 Since we have $\permset(T) = \bigcup_H \permset(H)$, where the union is disjoint and to be taken over all histories leading to $T$, this means we can construct $\permset(T)$ out of $\permset(T^{(1)})$ by performing steps 1--3 for all $\pi^{(1)}$ in $\permset(T^{(1)})$.
\end{remark}

\subsection{An example}\label{subsection:ex}

We give an example to illustrate the procedure: Suppose $m=1$, $n=9$, and consider the permutation $\pi=(6,1,2,4,7,5,9,8,3)$. This permutation produces a $B$-tree $T$ of the form given in Figure~\ref{fig:exT} -- in fact, this permutation gives the history shown in Figure~\ref{fig:bijection}. Thus $T^{(1)}$ contains 4 keys, and $\pi^{(1)}=(1,3,4,2)\in\permset(T^{(1)})$. 

\emph{Step 1:} The in-order traversal of $T$ reveals that the keys in $T^{(1)}$ correspond to the keys $2,4,6,8$ in $T$. Then the injection on the keys is given by $\{1,2,3,4\}\ni i\mapsto 2i\in\{1,\dots,9\}$, and applying this to the entries of $\pi$ gives $\pi_\iota=(2,6,8,4)$. 

\emph{Step 2:} Constructing a binary search tree from $\pi^{(1)}$ gives the one shown in Figure~\ref{fig:exBST} which is then turned into the DAG $G=G(T,\pi^{(1)})$ shown in Figure~\ref{fig:exG} (the remaining labels are there to indicate how it connects to the binary search tree and to $\pi^{(1)}$). This graph has three distinct topological labellings, one of which induces the $H$ depicted in Figure~\ref{fig:exH}. 

\emph{Step 3:} We initialize $p=(\_,\_,\_,\_,\_,\_,\_,\_,\_)$, $\mc R=\{1,\dots,9\}$, $\mc K=(2,6,8,4)$ and  $\mc H=H$. After step 3.1, we have e.g. $p=(\ell,s,2,\ell,\ell,\ell,\ell,\ell,\ell)$, where we write $s$ for a small position, and $\ell$ for a large. Here, the assignment of $\ell,s,2$ to the first 3 positions can be done arbitrarily (but we choose the options that will reconstruct $\pi$ from above), the remainder is given by comparing it to $H$: the vertices labelled $4,5,\dots$ are all positioned to the right of the top-most branching in $H$. This leads to $p^-=(\_)$ with $\mc R^-=\{1\}$, which in the next round of the recursion simply becomes $p=(1)$, and to $p^+=(\_,\_,\_,\_,\_,\_,\_)$ with $\mc R^+=\{3,\dots,9\}$, $\mc K^+=\{6,8,4\}$, and an $\mc H^+=H'$ given by Figure~\ref{fig:exH2}.

In the second round of the recursion (using the ``$+$''-branch, as the other one is trivial), we have e.g. $p=(6,s,\ell,s,\ell,\ell,s)$, where the assignment of $\ell,s,6$ to the first 3 positions can again be done arbitrarily, and the rest is governed by $H'$. This gives $p^\pm=(\_,\_,\_)$ with $\mc R^-=\{3,4,5\}, \mc K^-=(4)$ and $\mc R^+=\{7,8,9\}, \mc K^+=(8)$, respectively. Due to the small number of entries, both $\mc H^\pm$ are given by the unique 3-historic tree on 3 vertices. In the next two rounds of the recursion, the $p^\pm$ will then be filled by arbitrary assignments of the numbers in their range, say $p^+=(7,9,8)$ and $p^-=(4,5,3)$. 

Finally, we can put everything back together by embedding a pair of $p^\pm$ into the previous $p$ according to the assignment of $s$ and $\ell$. Thus, $p^+=(7,9,8)$ and $p^-=(4,5,3)$ together yield $(6,4,7,5,9,8,3)$. This in turn was $p^+$ from the first iteration of step 3, and together with the corresponding $p^-=(1)$, we regain $\pi=(6,1,2,4,7,5,9,8,3)$.  

\begin{figure}
 \centering
 \captionsetup[subfigure]{justification=centering}
 \begin{minipage}[b][][t]{.2\textwidth}
  \begin{subfigure}[t]{\textwidth}
   \centering
   \includegraphics[width=\textwidth]{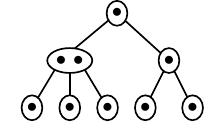}
   \caption{The $B$-tree $T$}
   \label{fig:exT}
  \end{subfigure}
  \\[1em]
  \begin{subfigure}{\textwidth}
   \centering
   \includegraphics[width=0.8\textwidth]{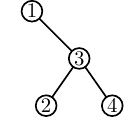}
   \caption{Binary search tree}
   \label{fig:exBST}
  \end{subfigure}
 \end{minipage}
 \hfill
 \begin{minipage}[b]{0.25\textwidth}
  \begin{subfigure}[b]{\textwidth}
   \centering
   \includegraphics[width=\textwidth]{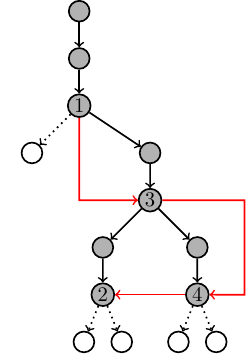}
   \caption{$G(T,\pi^{(1)})$}
   \label{fig:exG}
  \end{subfigure}
 \end{minipage}
 \hfill
 \begin{minipage}[b]{0.25\textwidth} 
  \begin{subfigure}[b]{\textwidth}
   \centering
   \includegraphics[width=\textwidth]{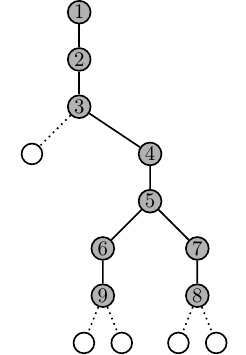}
   \caption{$H$}
   \label{fig:exH}
  \end{subfigure}
 \end{minipage}
 \hfill
 \begin{minipage}[b]{0.2\textwidth}
  \begin{subfigure}[b]{\textwidth}
   \centering
   \includegraphics[width=\textwidth]{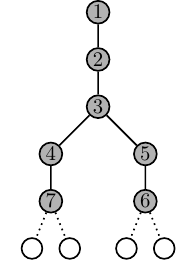}
   \caption{$H'$}
   \label{fig:exH2}
  \end{subfigure}
 \end{minipage}
 \caption{Steps in the algorithm of Section~\ref{section:perm}, as performed in Subsection~\ref{subsection:ex}.}
\end{figure}

\subsection{Proofs}

\begin{proof}[Proof of Lemma~\ref{lemma:digraph}]
 For $G$ to contain a directed cycle, we need two vertices $v_i=\pi^{(1)}(i)$ and $v_j=\pi^{(1)}(j)$ such that $v_i$ is a descendant of $v_j$ in the tree (i.e., according to the black edges), but $v_j$ is a descendant of $v_i$ according to the red edges. However, the latter only means that $i<j$. Accordingly, $\pi^{(1)}(i)$ was the first to be used for the binary search tree's construction, and hence $v_i$ cannot be below $v_j$ in the tree. Thus $G$ is acyclic. 
 
 Trivially, any topological ordering of $G$ yields an increasing labelling for the tree, so the induced $H$ is historic. By construction of $H$, the final tree in the corresponding history $(T_1,\dots,T_n)$ will have the same leaves as $T$ (according to Proposition~\ref{prop:bijection}(iii)), and thus the same set of keys in $T_n^{(1)}$ as in $T^{(1)}$. Moreover, by Lemma~\ref{lemma:Hfact}, these keys are moved upwards from the leaves in the relative order described by $\pi^{(1)}$, hence $T_n^{(1)}=T^{(1)}$ and $H$ is a history of $T$ that belongs to $\hat\Psi^{-1}(\pi^{(1)})$.  
 
 Conversely, consider now a history $(T_1,\dots,T_n)\in\hat\Psi^{-1}(\pi^{(1)})$ with the associated historic tree $H_n$. As in step 1, we obtain from $\pi^{(1)}$ and $T_n$ a sequence $\pi_\iota=(K_{i_1},\dots,K_{i_{n_1}})$, where the $i_j$ are precisely the times in the history when a leaf was split. The historic tree $H_n$ therefore must have its branchings labelled by the $i_j$ (and this forces the labelling to be increasing along the red edges in $G$), and to be consistent with Lemma~\ref{lemma:Hfact}, the left/right-positioning of the branchings has to correspond to the one in a binary search tree obtained from $\pi_\iota$ or equivalently $\pi^{(1)}$. Thus, such an $H_n$ is of the type constructed in Step 2, and its labelling is a topological labelling of $G$.
\end{proof}

\begin{proof}[Proof of Lemma~\ref{lemma:step3}]
 Claim (i) is evident from the construction. For (ii), the equality $|\mc H^\pm|=|\mc R^\pm|$ follows from Lemma~\ref{lemma:Hfact}, we have $|\mc R^\pm|\geq m$ since $|\mc R|\geq 2m+1$, and consecutivity is again immediate from the construction. Claim (iii) again follows from Lemma~\ref{lemma:Hfact}. Taken together, these three claims ensure that the recursion in step 3 is well-defined whenever we initialize as in step 3.0. 
 
 For the final assertion, we first define $\pi^\pm$ to be permutations obtained from $p^\pm$ in step 3.2 by mapping $\mc R^\pm$ to $\{1,\dots,|\mc R^\pm|\}$ in an order-preserving fashion. We now use strong induction on $n$, where $T, \pi^{(1)}$, and $H$ are arbitrary but coherent in the sense of Lemma~\ref{lemma:digraph}. For $n\leq 2m$, there is nothing to show, as step 3.1 gives $\permset(H)=S_n$. 
 
 For all larger $n$, observe that if $\pi\in\permset(H)$ then $K_{i_1}$ is the median of $\pi(1),\dots,\pi(2m+1)$ and $\pi^\pm\in \permset(\mc H^\pm)$. The first property is equivalent to $K_{i_1}$ being moved upwards from the leaves at the first branching of $H$ and is ensured by step 3.1. The second property comes from Lemma~\ref{lemma:Hfact}: The large entries in $\pi$ are precisely (except for the first $m$) those corresponding to the right descendants of the first branching in $H$. This holds by step 3.2, and $\pi^\pm$ are constructible by the induction hypothesis since $|\mc R^\pm|<n$. Thus, such $\pi$ is constructible. 
 
 Conversely, suppose $\pi$ is constructible. Then, by the recursion, $\pi^\pm$ are constructible, and thus $\pi^\pm\in\permset(\mc H^\pm)$ by the induction hypothesis. The $i$-th entry of $\pi^\pm$ is simultaneously the $i$-th large/small entry (according to step 3.1) of $\pi$. For $i<m$, this is chosen arbitrarily among all possible configurations for the first $2m+1$ entries of $\pi$. For $i>m$, the entry in $\pi$ is dictated by $H$, but is not among the first $2m+1$. Thus, it corresponds to a descendant of the first branching, and it follows from Lemma~\ref{lemma:Hfact} that filling the large/small entries with the entries from $\pi^\pm$ in order produces a $\pi\in\permset(H)$.  
\end{proof}

\begin{proof}[Proof of Proposition~\ref{prop:permnumber}]
 This lemma follows from an analysis of step 3. Indeed, whenever we are placing a $\mc K_1$ into the permutation in step 3.1, we have $2m+1$ choices for the exact position, and then $\binom{2m}{m}$ choices for the location of the small positions within the first $2m+1$ slots of $p$. Placing such a $\mc K_1$ corresponds exactly to the branchings in $H$, and it is clear that every possible choice will lead to a different $\pi$ in the end. Whenever we have $|\mc R|\leq 2m$, we have $|\mc R|!$ choices. Moreover, invoking Lemma~\ref{lemma:step3}, the corresponding $\mc K$ is the empty sequence, hence the entries of $\mc R$ are the keys that end up in a joint leaf---say, the $i$-th leaf---of $T$. Thus by Proposition~\ref{prop:bijection}(iii), $|\mc R|=m+s_i$, and \eqref{eq:permnumber} follows. 
\end{proof}

\section{The number of histories}

In this section, we will be interested in the number of possible histories that can arise, and in particular the asymptotic behaviour of this number. We focus on the case $m=1$. In this case, a historic tree is a binary increasing tree where only vertices at even heights can have two children. Vertex $2$ is always the only child of vertex $1$, and vertex $3$ is always the only child of vertex $2$. It will be advantageous later to remove vertex $1$ (and decrease all other labels by $1$); we call the result a \emph{reduced historic tree}. In such a tree, only vertices at odd heights can have two children.

If we remove the top two vertices (vertices $1$ and $2$, referred to in the following as the \emph{stem}) from a reduced historic tree with $n$ vertices, then it decomposes into two smaller reduced historic trees, each possibly only consisting of a single external vertex.
On the level of generating functions, this translates to a second-order differential equation for the exponential generating function $H(x) = \sum_{n \geq 0} \frac{h_n}{n!} x^n$, where $h_n$ is the number of reduced historic trees with $n$ internal vertices (equivalently, the number of histories of length $n+1$). We have
\begin{equation}\label{eq:diffeq2}
 H''(x) = H(x)^2,\qquad H(0) = H'(0) = 1.
\end{equation}
One can compare this to the well-known differential equation $T'(x) = T(x)^2$ for the exponential generating function associated with arbitrary binary increasing trees, see for example \cite[Lemma 6.4]{drmota09}. We remark here that the tree consisting only of a single external vertex is often not counted, in which case the equation becomes $T'(x) = (1+T(x))^2$ instead. The sequence $h_n$ (see \cite[A007558]{OEIS}) and the associated differential equation~\eqref{eq:diffeq2} were analysed in a different context in \cite{bodini16}: the differential equation has an explicit solution that can be expressed in terms of the Weierstrass elliptic function. It has a dominant singularity at $\rho \approx 2.3758705509$ where
\begin{equation*}
H(x) \sim \frac{c}{(1-x/\rho)^2}
\end{equation*}
for a constant $c = 6\rho^{-2} \approx 1.0629325375$. This leads to the following asymptotic behaviour:
\begin{equation*}
\frac{h_n}{n!} \sim cn \rho^{-n} = 6n \rho^{-n-2}.
\end{equation*}

We would now like to generalise the differential equation to arbitrary $m \geq 1$. We remove the first $m$ vertices from a $(2m+1)$-historic tree to obtain a reduced $(2m+1)$-historic tree, which is now a binary increasing tree where only vertices at heights $\equiv -1 \mod m+1$ can have two children. Removing the stem consisting of $m+1$ vertices decomposes such a tree into two smaller trees with the same property (each of them can also be a single external vertex). So in analogy to~\eqref{eq:diffeq2}, we obtain a differential equation of order $m+1$, namely
\begin{equation}\label{eq:diffeqm}
 H^{(m+1)}(x) = H(x)^2,\qquad H(0) = H'(0) = \cdots = H^{(m)}(0) = 1.
\end{equation}

This higher-order differential equation can no longer be solved in an explicit fashion, as it was the case for $m=1$. If we assume that there is a dominant singularity $\rho_m$ where the behaviour of $H$ is of the form $c_m (1-x/\rho_m)^{-a_m}$, then comparing the two sides of the equation gives us
\begin{equation*}
c_m \frac{a_m(a_m+1)\cdots(a_m+m)}{\rho_m^{m+1}}
(1-x/\rho_m)^{-a_m-m-1} = c_m^2 (1-x/\rho_m)^{-2a_m},
\end{equation*}
thus $a_m = m+1$ and $c_m = \frac{(2m+1)!}{m!} \rho_m^{-m-1}$. Applying singularity analysis would then yield
\begin{equation*}
[x^n] H(x) \sim \frac{c_m}{m!} n^m \rho_m^{-n} = \frac{(2m+1)!}{(m!)^2} n^m \rho_m^{-n-m-1}.
\end{equation*}
This leads us to the following conjecture:

\begin{conjecture}\label{conj:general_m}
For every $m \geq 1$, the number of reduced $(2m+1)$-historic trees with $n$ vertices (corresponding to histories of length $n+m$) is asymptotically equal to
\begin{equation*}
n! \cdot \frac{(2m+1)!}{(m!)^2} n^m \rho_m^{-n-m-1}
\end{equation*}
for some positive constant $\rho_m$.
\end{conjecture}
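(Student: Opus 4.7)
The plan is to promote the heuristic singular-expansion argument already given in the excerpt into a rigorous application of Flajolet--Odlyzko singularity analysis. First I would verify that $0<\rho_m<\infty$. Positivity of the Taylor coefficients of $H$, combined with an elementary combinatorial upper bound (each historic-tree insertion has at most linearly many choices, so $h_n\le C^n n!$), yields $\rho_m>0$, and Pringsheim's theorem locates a singularity on the positive real axis at $x=\rho_m$. To show $\rho_m<\infty$ one can argue on the real axis: $H,H',\dots,H^{(m)}$ are positive and increasing on $[0,\rho_m)$, and the quadratic nonlinearity $H^{(m+1)}=H^2$ forces finite-time blowup. For $m=1$ this is witnessed by the first integral $(H')^2=\tfrac{2}{3}H^3+\tfrac{1}{3}$; for general $m$ one can still obtain a lower bound of the form $H(x)\ge A(\rho-x)^{-(m+1)}$ via iterated integration of $H^{(m+1)}\ge H^2$ coupled with the monotonicity of the intermediate derivatives, establishing blowup at a finite $\rho>0$.

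Next I would make the local singular expansion rigorous. Substituting the ansatz $H(x)=(1-x/\rho_m)^{-(m+1)}W(x)$ into \eqref{eq:diffeqm} and applying Leibniz's rule, the most singular contribution to $H^{(m+1)}$ is $\frac{(2m+1)!}{m!\,\rho_m^{m+1}}(1-x/\rho_m)^{-(2m+2)}W(x)$. Matching this against $H^2=(1-x/\rho_m)^{-(2m+2)}W(x)^2$ and cancelling the common singular factor yields a regular nonlinear ODE for $W$ on a neighbourhood of $x=\rho_m$, with the boundary condition $W(\rho_m)=c_m:=\frac{(2m+1)!}{m!\,\rho_m^{m+1}}$. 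An appeal to the Cauchy--Kovalevskaya theorem (or a Banach fixed-point argument in a small disk) produces a unique analytic $W$ with this value at $\rho_m$, and by uniqueness of analytic continuation it must coincide with the Taylor branch of $H$.

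The main obstacle is the third step: extending the analyticity of $H$ to a $\Delta$-domain $\{|z|<\rho_m+\eta,\;|\arg(z/\rho_m-1)|>\phi,\;z\neq\rho_m\}$ needed for the transfer theorem. Since \eqref{eq:diffeqm} is an analytic ODE with polynomial right-hand side, $H$ extends meromorphically along paths avoiding its movable singularities, and the Painlev\'e $\alpha$-test indicates that the leading behaviour at any such singularity is $(x-x_0)^{-(m+1)}$. The question is whether any other singularity lies on the circle $|z|=\rho_m$ or inside the candidate $\Delta$-sector. I would first argue aperiodicity of $(h_n)_{n\ge 0}$, so that candidate singularities on the circle cannot be $\rho_m$ times a nontrivial root of unity, and then attempt to transport the real-axis blowup analysis to other rays via a monodromy or symmetry study of the ODE. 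For $m=1$ this structure is encoded in the doubly-periodic Weierstrass $\wp$-function, but no such explicit parametrisation is available for general $m$, which is presumably why the statement is left as a conjecture. Once $\Delta$-analyticity is secured, the Flajolet--Odlyzko transfer theorem applied to the singular part $c_m(1-x/\rho_m)^{-(m+1)}$ gives
\[
[x^n]H(x)\sim\frac{c_m}{m!}\,n^m\rho_m^{-n}=\frac{(2m+1)!}{(m!)^2}\,n^m\rho_m^{-n-m-1},
\]
and multiplication by $n!$ yields the conjectured asymptotic for $h_n$.
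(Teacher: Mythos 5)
This statement is explicitly labelled a \emph{conjecture} in the paper: the authors give only a heuristic singular-expansion argument (the same ansatz $H(x)\sim c_m(1-x/\rho_m)^{-a_m}$, the same matching yielding $a_m=m+1$ and $c_m=\frac{(2m+1)!}{m!}\rho_m^{-m-1}$, and a formal appeal to singularity analysis) and then declare the result open. Your proposal is, deliberately, the same heuristic pushed one level toward rigour rather than a different route, and you are candid that it does not close the argument. That honesty is appropriate: the decisive gap is exactly the one you name in your third step, namely establishing $\Delta$-analyticity of $H$ past $\rho_m$ and ruling out further singularities of the analytic continuation on the circle of convergence. For $m=1$ this is handled in the literature via the explicit Weierstrass parametrisation; for general $m$ no such structure is known, and neither aperiodicity of $(h_n)$ nor the Painlev\'e $\alpha$-test by itself controls where the movable singularities of the nonlinear ODE actually lie in the complex plane. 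This is precisely why the statement is a conjecture, and your sketch does not resolve it.

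Two smaller points deserve flagging. First, the appeal to Cauchy--Kovalevskaya in your second step is not available as stated: after substituting $H(x)=(1-x/\rho_m)^{-(m+1)}W(x)$ and clearing the common factor, the resulting equation for $W$ has its top-order derivative multiplied by $(1-x/\rho_m)^{m+1}$, so $x=\rho_m$ is a singular point of the transformed ODE, not an ordinary one. One would instead need a Frobenius/resonance analysis for a nonlinear equation, and for general $m$ it is not automatic that the resonances are non-negative integers avoiding clashes; logarithmic terms could in principle enter, which would already change the shape of the asymptotic. (For $m=1$ this is known to work out, again via the elliptic-function connection.) Second, for the preliminary bound $0<\rho_m<\infty$, a purely combinatorial argument is both simpler and cleaner than real-axis blowup: Proposition~\ref{prop:bijection}(i) gives that at time $k$ the number of admissible attachment points equals the number of leaves of $T_k$, which is between roughly $k/(2m)$ and $k/m$, whence $\frac{(n-1)!}{(2m)^{n-1}}\le h_n\le \frac{(n-1)!}{m^{n-1}}$ up to constants, pinning the radius of convergence of $\sum h_n x^n/n!$ in $[m,2m]$. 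In summary: your proposal is a reasonable and well-organised plan of attack, identical in spirit to the paper's heuristic, but it does not constitute a proof, and the key missing ingredient is the one you yourself identify.
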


Numerical evidence for small values of $m$ seems to support this conjecture, as the fit of the asymptotic formula with the actual coefficients is excellent. Experimental values of the exponential growth rate $\rho_m^{-1}$ are given in Table~\ref{tab:num-values}.

\begin{table}[htbp]
\centering
\begin{tabular}{|c|ccccc|}
\hline
$m$ &2 & 3 & 4 & 5 & 6 \\
\hline
$\rho_m^{-1}$ & $3.7746$ & $5.1792$ & $6.5857$ & $7.9928$ & $9.3999$ \\
\hline
\end{tabular}
\caption{Experimental values of $\rho_m^{-1}$ for $2 \leq m \leq 6$.}\label{tab:num-values}
\end{table}

\section{Statistics of \textit{B}-trees via historic trees}

Let us now study $B$-trees that are constructed by successive insertion of $n$ random numbers. Equivalently, we can think of them as being constructed from a random permutation of $1,2,\ldots,n$. In order to apply the connection to historic trees, we need to take the number of permutations associated with a specific history into account.

Again, we focus on the special case $m=1$. Proposition~\ref{prop:permnumber} tells us that the number of permutations corresponding to a specific historic tree $T$ is in this case $6^{\br(T)}2^{\betw(T)}$, where $\br(T)$ is the number of branchings and $\betw(T)$ the number of internal vertices that lie directly between a branching and an external vertex. This remains true if we consider reduced historic trees. We associate this number as a weight $w(T) = 6^{\br(T)}2^{\betw(T)}$ with every reduced historic tree $T$ and consider the weighted exponential generating function (rather than the unweighted one that was analysed in the previous section). In the recursive decomposition of a reduced historic tree into its stem and two smaller trees $T_1$ and $T_2$, we have $\br(T) = \br(T_1) + \br(T_2) + 1$ and $\betw(T) = \betw(T_1) + \betw(T_2)$. This is even true if $T_1$ or $T_2$ (or both) only consist of a single external vertex. Thus we obtain
\begin{equation}\label{eq:weight_rec}
w(T) = 6^{\br(T_1) + \br(T_2)+1} \cdot 2^{\betw(T_1) + \betw(T_2)} = 6 w(T_1) w(T_2).
\end{equation}
On the level of the weighted exponential generating function $W(x)$,~\eqref{eq:diffeq2} becomes
\begin{equation}\label{eq:diffeqw2}
 W''(x) = 6W(x)^2,\qquad W(0) = 1, W'(0) = 2.
\end{equation}
Unlike~\eqref{eq:diffeq2}, however, there is now a very simple explicit solution, namely $W(x) = \frac{1}{(1-x)^2}$. This is not unexpected, since the total weight of all reduced historic trees with $n$ internal vertices must be equal to the number of permutations of $1,2,\ldots,n+1$. Thus 
\begin{equation*}
W(x) = \sum_{n \geq 0} \frac{(n+1)!}{n!} x^n = \frac{1}{(1-x)^2}.
\end{equation*}

Recall that the number of external vertices in $n$-vertex reduced $3$-historic trees is in bijection with the number of leaves in $2$-$3$-trees built from $n+1$ keys. Thus, as a next step, we incorporate the number of external vertices $\ext(T)$ as an additional statistic in our generating function in order to prove the following theorem:

\begin{theorem}\label{thm:leaf-distribution}
Let $L_n$ be the number of leaves in a $2$-$3$-tree built from $n$ random keys. Then we have $\Ex(L_n) = \frac37 (n+1)$ and $\Var(L_n) = \frac{12}{637} (n+1)$ for $n > 11$. Moreover, the central limit theorem
\begin{equation*}
\frac{L_n - \Ex(L_n)}{\sqrt{\Var(L_n)}} \overset{\mathrm{d}}{\to} N(0,1)
\end{equation*}
holds.
\end{theorem}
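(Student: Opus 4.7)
The strategy is to translate the statistic $L_n$ via the bijection of Theorem~\ref{thm:main}: by Proposition~\ref{prop:bijection}(i), the number of leaves of a $2$-$3$ tree built from $n$ keys equals the number $\ext(T)$ of external vertices of the reduced historic tree $T$ (on $n-1$ internal vertices) produced by the insertion. The plan is to refine the weighted EGF $W(x)=1/(1-x)^2$ to a bivariate weighted EGF
\begin{equation*}
 F(x,u) := \sum_T w(T)\, u^{\ext(T)} \frac{x^{|T|}}{|T|!},
\end{equation*}
where the sum is over all reduced $3$-historic trees. Since $\ext(T) = \ext(T_1) + \ext(T_2)$ under the stem-plus-subtrees decomposition while $w(T) = 6\, w(T_1) w(T_2)$ as in~\eqref{eq:weight_rec}, the differential equation~\eqref{eq:diffeqw2} refines to
\begin{equation*}
 F''(x,u) = 6\, F(x,u)^2, \qquad F(0,u) = u,\; F'(0,u) = 2u,
\end{equation*}
with $F(x,1) = W(x)$.

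To obtain the mean and variance, set $B(x) := \partial_u F(x,u)|_{u=1}$ and $C(x) := \partial_u^2 F(x,u)|_{u=1}$. Differentiating the ODE once and twice in $u$ at $u=1$ yields the linear equations
\begin{equation*}
 (1-x)^2 B''(x) = 12\, B(x), \qquad (1-x)^2 C''(x) = 12\, C(x) + 12 (1-x)^2 B(x)^2,
\end{equation*}
with $B(0)=1, B'(0)=2$ and $C(0)=C'(0)=0$. The Euler-type homogeneous problem has indicial equation $\alpha(\alpha-1)=12$ with roots $\alpha\in\{4,-3\}$, so fitting the initial conditions gives
\begin{equation*}
 B(x) = \tfrac{1}{7}(1-x)^4 + \tfrac{6}{7}(1-x)^{-3}.
\end{equation*}
A particular solution of the $C$-equation is built as $\sum c_k(1-x)^{\beta_k}$ matching the three monomial terms $(1-x)^{8}, (1-x), (1-x)^{-6}$ in $B^2$, followed by a homogeneous correction enforcing the initial conditions. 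Since the total weight among reduced trees with $n-1$ internal vertices equals $n!$, we have $\Ex(L_n) = n^{-1}[x^{n-1}] B(x)$ and $\Ex[L_n(L_n - 1)] = n^{-1}[x^{n-1}] C(x)$. For $n > 11$ every polynomial piece $(1-x)^k$ with $k \in \{3,4,10\}$ contributes zero under $[x^{n-1}]$, leaving only the negative-exponent terms, and a short computation produces the stated closed forms for $\Ex(L_n)$ and for $\Var(L_n) = \Ex[L_n(L_n-1)] + \Ex(L_n) - \Ex(L_n)^2$.

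For the central limit theorem I plan to invoke Hwang's quasi-power theorem. The ODE admits the first integral $(F')^2 = 4F^3 + 4u^2(1-u)$ (fixed by the initial conditions), so the dominant singularity $\rho(u)$ of $F(\cdot,u)$ is characterized implicitly by
\begin{equation*}
 \rho(u) = \tfrac{1}{2}\int_u^\infty \frac{dF}{\sqrt{F^3 + u^2(1-u)}}.
\end{equation*}
Standard analytic-dependence results for ODE flows, combined with a blow-up analysis near $x=\rho(u)$, imply that $\rho$ is analytic in a complex neighborhood of $u=1$ with $\rho(1)=1$, and that the singular expansion $F(x,u)\sim c(u)(1-x/\rho(u))^{-2}$ holds uniformly in that neighborhood. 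Strict positivity of the asymptotic variance $\sigma^2 = 12/637$ established in the previous paragraph supplies the non-degeneracy hypothesis, whereupon the quasi-power theorem delivers the claimed Gaussian limit.

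The delicate step is the last one: establishing analyticity of $\rho(u)$ together with a uniform singular expansion of $F(x,u)$ for complex $u$ near $1$ requires careful complex-analytic handling of the ODE. By contrast, once the bivariate ODE is differentiated, the moment computations in the middle paragraphs are routine linear algebra.
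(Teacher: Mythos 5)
Your proposal is essentially identical to the paper's proof: same bivariate weighted generating function $W(x,u)$ with $u$ marking external vertices, same refined ODE $W''=6W^2$ with $u$-dependent initial conditions, same Euler-type linear equations for the first two moment generating functions $W_1$ and $W_2$ (your $B$ and $C$), same first integral $(W')^2 = 4W^3 + 4u^2(1-u)$ giving the implicit formula for $\rho(u)$, and the same appeal to the quasi-power theorem for the Gaussian limit. The only differences are cosmetic (a shift of one in the indexing, since you expand at $[x^{n-1}]$ for $n$ keys where the paper writes $[x^n]$ for $n$ internal vertices), and you explicitly flag the analyticity of $\rho(u)$ as the technical crux, which the paper handles by citing~\cite{bodini16} and~\cite[Theorem~IX.8]{FS2009}.
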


\begin{proof}
Let us consider the bivariate generating function in which the second variable $u$ marks the number of external vertices $\ext(T)$:
\begin{equation*}
W(x,u) = \sum_T \frac{1}{|T|!} x^{|T|} u^{\ext(T)}.
\end{equation*}
Since $\ext(T) = \ext(T_1) + \ext(T_2)$, the differential equation~\eqref{eq:diffeqw2} is actually unaffected by the additional variable; the only change concerns the initial values. We have (where derivatives are taken with respect to $x$)
\begin{equation}\label{eq:diffeqwu2}
 W''(x,u) = 6W(x,u)^2,\qquad W(0,u) = u, W'(0,u) = 2u,
\end{equation}
which no longer has an equally simple explicit solution. Using the method described in~\cite{bodini16}, it can, however, be expressed as the inverse function to
\begin{equation*}
X(w,u) = \int_u^w \frac{1}{\sqrt{4t^3+4u^2(1-u)}} \,dt.
\end{equation*}
Hence $W(x,u)$ has a dominant singularity at $\rho(u) = \int_u^{\infty} \frac{1}{\sqrt{4t^3+4u^2(1-u)}} \,dt$: as $w \to \infty$, we have
\begin{equation*}
X(w,u) = \rho(u) - \frac{1}{\sqrt{w}} + O(w^{-7/2}),
\end{equation*}
thus
\begin{equation*}
W(x,u) \sim \frac{1}{(\rho(u)-x)^2}
\end{equation*}
at the singularity. An application of the quasi-power theorem \cite[Theorem IX.8]{FS2009} yields a central limit theorem for the number of external vertices. Moreover, one can obtain explicit expressions for the moments. Differentiating~\eqref{eq:diffeqwu2} with respect to $u$ and plugging in $u=1$, we obtain the following differential equation for $W_1(x) = \frac{\partial}{\partial u} W(x,u)\Big|_{u=1}$:
\begin{equation*}
W_1''(x) = 12W(x,1)W_1(x) = \frac{12}{(1-x)^2} W_1(x),\qquad W_1(0) = 1, W_1'(0) = 2,
\end{equation*}
since we already know that $W(x,1) = W(x) = (1-x)^{-2}$. This linear differential equation  has the two linearly independent solutions $(1-x)^{-3}$ and $(1-x)^4$, and one obtains
\begin{equation*}
W_1(x) = \frac{6}{7(1-x)^3} + \frac{(1-x)^4}{7}.
\end{equation*}
Thus for $n > 4$, we have $[x^n] W_1(x) = \frac67 [x^n] (1-x)^{-3} = \frac67 \binom{n+2}{2}$. Consequently, the average number of external vertices is
\begin{equation*}
  \frac{[x^n] W_1(x)}{[x^n] W(x)} = \frac{\frac67 \binom{n+2}{2}}{n+1} = \frac{3(n+2)}{7}.
\end{equation*}
In the same way, one can treat the second moment: to this end, we consider $W_2(x) = \big( \frac{\partial}{\partial u} \big)^2 W(x,u)\Big|_{u=1}$. Differentiating~\eqref{eq:diffeqwu2} twice with respect to $u$ and plugging in $u=1$ now gives us
\begin{align*}
W_2''(x) &= 12W(x)W_2(x) +12 W_1(x)^2 \\
&= \frac{12}{(1-x)^2} W_2(x) + 12 \Big( \frac{6}{7(1-x)^3} + \frac{(1-x)^4}{7} \Big)^2,
\end{align*}
and $W_2(0) = W_2'(0) = 0$. 
The solution to this differential equation is given by
\begin{equation*}
W_2(x) = \frac{54}{49 (1-x)^4} -\frac{108}{91 (1-x)^3} - \frac{24}{49}
   (1-x)^3+\frac{4}{7} (1-x)^4+\frac{2}{637} (1-x)^{10}.
\end{equation*}
So for $n > 10$, $[x^n] W_2(x) = \frac{54}{49} \binom{n+3}{3} - \frac{108}{91} \binom{n+2}{2} = \frac{9(n+1)(n+2)(13n-3)}{637}$. It follows that the variance of the number of external vertices is
\begin{equation*}
\frac{[x^n] (W_1(x)+W_2(x))}{[x^n] W(x)} - \Big( \frac{[x^n] W_1(x)}{[x^n] W(x)} \Big)^2 = \frac{12(n+2)}{637}.
\end{equation*}
This completes the proof.
\end{proof}

The approach in our proof provides an alternative to the analysis via P\'olya urns, see \cite{AFP88,BP85,BY95,yao78} (in particular, the mean was first determined by Yao \cite{yao78} by explicitly solving a recursion). Here, one can think of the leaves in a $B$-tree as balls in an urn of different types depending on the number of keys they hold. Adding a new key then corresponds to picking a ball from the urn and replacing it by a new ball (of different type), or two new balls in the case of a node split.

The same calculations for the moments as in Theorem~\ref{thm:leaf-distribution} can also be carried out for higher values of $m$, though the expressions become more complicated. For general $m \geq 1$, the differential equation becomes
\begin{equation*}
 W^{(m+1)}(x,u) = \frac{(2m+1)!}{m!^2} W(x,u)^2,
\end{equation*}
with initial values
\begin{equation*}
W^{(i)}(0,u) = (m+i)!u,\qquad i=0,1,\ldots,m.
\end{equation*}
In particular, we have $W(x) = W(x,1) = m!(1-x)^{-m-1}$, and $W_1(x) = \frac{\partial}{\partial u} W(x,u)\Big|_{u=1}$ satisfies the linear differential equation
\begin{equation*}
W_1^{(m+1)}(x) = \frac{2(2m+1)!}{m!} (1-x)^{-m-1}W_1(x).
\end{equation*}
Up to a trivial change of variables (substituting for $1-x$), this is a linear differential equation of Cauchy--Euler type that can be solved with standard tools. In fact, setting $e^{-t} = 1-x$ turns it into a linear differential equation with constant coefficients. Functions of the form $f(x) = (1-x)^{-b}$ with 
\begin{equation}\label{eq:b-eq}
b^{\overline{m+1}} = b(b+1)\cdots (b+m) = \frac{2(2m+1)!}{m!} = \frac{(2m+2)!}{(m+1)!}
\end{equation}
are particular solutions to this differential equation. Note that $b=m+2$ is always a solution to~\eqref{eq:b-eq}. The general solution can be determined as linear combination of particular solutions, taking the initial values into account. The term of the form $c (1-x)^{-m-2}$ in $W_1(x)$ dominates asymptotically.

To give one more concrete example, for $m=2$ we have the differential equation
\begin{equation*}
 W'''(x,u) = 30W(x,u)^2,\qquad W(0,u) = 2u, W'(0,u) = 6u, W''(0,u) = 24u.
\end{equation*}
Thus $W(x,1) = 2(1-x)^{-3}$. The solutions to~\eqref{eq:b-eq} are now $b=4$ and $b = \frac{-7 \pm \sqrt{71} i}{2}$. Taking the initial values into account, we obtain
\begin{multline*}
W_1(x) = \frac{60}{37(1-x)^4} + \frac{7\sqrt{71} +31i}{37\sqrt{71}} (1-x)^{(7+\sqrt{71}i)/2}\\
+ \frac{7\sqrt{71} -31i}{37\sqrt{71}} (1-x)^{(7-\sqrt{71}i)/2}.
\end{multline*}
Thus the average number of external vertices in reduced $5$-historic trees with $n$ vertices is $\frac{10(n+3)}{37} + O(n^{-13/2})$.

For general $m$, one finds from the differential equation that the function $\ell(t) = W_1(1-e^{-t})$ has Laplace transform
\begin{equation*}
L(s) = \frac{m! ((m+1)^{\overline{m+1}} - s^{\overline{m+1}})}{(s-m-1)((m+2)^{\overline{m+1}} - s^{\overline{m+1}})},
\end{equation*}
where $s^{\overline{h}} = s(s+1)\cdots(s+h-1)$ is a rising factorial as in~\eqref{eq:b-eq}. The term $\frac{\kappa_m}{s-m-2}$ in the partial fraction decomposition corresponds to the dominant term
\begin{equation*}
 W_1(x) \sim \frac{\kappa_m}{(1-x)^{m+2}}.
\end{equation*}
Here we have, with $H_k = 1 + \frac12 + \cdots + \frac{1}{k}$ denoting a harmonic number,
\begin{equation*}
\kappa_m = \frac{m!}{2 (H_{2 m + 2} - H_{m+1})}.
\end{equation*}
Consequently, the average number of external vertices in reduced $(2m+1)$-historic trees with $n$ vertices is asymptotically equal to $\frac{\kappa_m}{(m+1)!} \cdot n = \frac{1}{2(m+1)(H_{2 m + 2} - H_{m+1})} \cdot n$. Some explicit values of the constant $\frac{\kappa_m}{(m+1)!}$ are given in Table~\ref{tab:const-values}.

\begin{table}[htbp]
\centering
\begin{tabular}{|c|cccccccccc|}
\hline
$m$ &1 &2 & 3 & 4 & 5 & 6 & 7 & 8 & 9 & 10 \\
\hline
$\frac{\kappa_m}{(m+1)!}$ & $\frac37$ & $\frac{10}{37}$ & $\frac{105}{533}$ & $\frac{252}{1627}$ & $\frac{2310}{18107}$ & $\frac{25740}{237371}$ & $\frac{9009}{95549}$ & $\frac{136136}{1632341}$ & $\frac{11639628}{155685007}$ & $\frac{10581480}{156188887}$ \\
\hline
\end{tabular}
\caption{Values of $\frac{\kappa_m}{(m+1)!} = \frac{1}{2(m+1)(H_{2 m + 2} - H_{m+1})} $ for $1 \leq m \leq 10$.}\label{tab:const-values}
\end{table}

\section{Conclusion and perspective}

The connection between $B$-tree histories and historic trees provides us with a novel way to analyse $B$-trees and their evolution. Possible future directions include studying further statistics of $B$-trees and historic trees and considering higher values of $m$. In particular, a proof of Conjecture~\ref{conj:general_m} would be desirable. It might even be interesting, at least from a purely mathematical perspective, to allow $m$ to grow with $n$.

\section*{Acknowledgements}

F. Burghart has received funding from the European Union's Horizon 2020 research and innovation programme under the Marie Sk\l{}odowska-Curie grant agreement no. 101034253. S. Wagner was supported by the Swedish research council (VR), grant 2022-04030.

\bibliographystyle{alphaurl}
\bibliography{BtreesBib}

\end{document}